\makeatletter \@namedef{subjclassname@2010}{
  \textup{2020} Mathematics Subject Classification}
\newtheorem{thm}{Theorem}[section]
\newtheorem{cor}[thm]{Corollary}
\newtheorem{lem}[thm]{Lemma}
\newtheorem{pro}[thm]{Proposition}
\newtheorem{conj}[thm]{Conjecture}
\theoremstyle{remark}
\newtheorem*{rema}{\textbf{Remark}}
\theoremstyle{definition}
\newtheorem{exa}[thm]{\textbf{Example}}
\newcommand{\Ima}{\operatorname{Im}}
\newcommand{\Real}{\operatorname{Re}}
\newcommand{\mi}{\operatorname{i}}
\newcommand{\R}{\mathbb{R}}
\newcommand{\N}{\mathbb{N}}
\newcommand{\C}{\mathbb{C}}
\begin{document}

\title[Reduction of powers of self-adjoint operators]{On the reduction of powers of self-adjoint operators}
\author[S. Kebli and M. H. Mortad]{Salima Kebli and Mohammed Hichem Mortad$^*$}

\date{}
\thanks{* Corresponding author.}
\keywords{Self-adjoint operators; Normal operators; Real and
Imaginary parts; Positive Operators; Nilpotent operators; Powers of
operators; Complex symmetric operators}

\subjclass[2010]{Primary 47A62. Secondary 47B15, 47B44, 47B65,
47A05.}

\address{(Both authors) Department of
Mathematics, University of Oran 1, Ahmed Ben Bella, B.P. 1524, El
Menouar, Oran 31000, Algeria.}

\email{skebli@yahoo.com, kebli.salima@univ-oran1.dz}

\email{mhmortad@gmail.com, mortad.hichem@univ-oran1.dz.}

\dedicatory{}

\begin{abstract}
Let $T\in B(H)$ be such that $T^n$ is self-adjoint for some
$n\in\mathbb{N}$ with $n\geq 3$. The paper's primary aim is to
establish the conditions that lead to the self-adjointness of $T$.
We pay particular attention to the case where $T^3=0$ and how it
implies $T$ is complex symmetric.
\end{abstract}

\maketitle

\section{Introduction}

First, we assume that readers are familiar with definitions and
concepts of bounded linear operators. A suitable reference for this
work is \cite{Mortad-Oper-TH-BOOK-WSPC}. We will recall specific
definitions, however.

In this manuscript, we denote a complex Hilbert space as $H$, which
can be infinite-dimensional. The algebra of bounded linear operators
from $H$ into itself is denoted as $B(H)$. Let $T \in B(H)$. Call
$T$ self-adjoint when $T^* = T$, where $T^*$ is the usual adjoint,
which is the conjugate transpose of $T$ if $T$ is a matrix. We say
that $T$ is normal if $TT^* = T^*T$. Say that $T\in B(H)$ is complex
symmetric if $T=CT^*C$ for some conjugation $C\in B(H)$, where a
conjugation is a conjugate-linear operator $C\in B(H)$ that is both
involutive ($C^2=I$) and isometric (see, e.g., \cite{Garcia-IEOT
T2=0 T complex symmetr}).

Call $T$ a square root of $S$, where $S\in B(H)$, provided $T^2=S$.

The operator $T$ is called positive, denoted as $T \geq 0$, if
$\langle Tx, x \rangle \geq 0$ for all $x \in H$ (in the context of
matrices, this means positive semi-definite). It is well-known that
each positive operator has a unique positive square root. Since
$T^*T$ is positive, it has a unique positive square root, which we
denote by $|T|$, as is customary.

Remember that any operator $T$ in $B(H)$ can be expressed as
$T=A+\mi B$, where $A$ and $B$ are self-adjoint. This decomposition
is called the Cartesian decomposition of $T$. As per convention, we
represent $A$ as $\Real T$ and $B$ as $\Ima T$, where $\Real
T=(T+T^*)/2$ and $\Ima T=(T-T^*)/{2\mi}$.

It is evident that $T$ is self-adjoint if and only if $B=0$. Also,
$T=0$ if and only if $A=B=0$. Furthermore, $T$ is normal if and only
if $AB=BA$.

That a normal operator with a real spectrum is self-adjoint is
typically proven using the spectral theorem. However, a new simple
proof based on the Cartesian decomposition can be found in
\cite{Mortad-INVERTBILITY-SUM}.

Now, consider the equation $T^2=S$, where $S,T\in B(H)$. It is of
some interest to know when $T$ belongs to the same class of $S$. For
example, if $S$ is self-adjoint (respectively normal), when is $T$
self-adjoint (respectively normal)? Just when $H=\C^2$, taking
$S=0$, which is obviously self-adjoint, shows that $T$ can be
anything, e.g., $T=\left(
                                                         \begin{array}{cc}
                                                           0 & 1 \\
                                                           0 & 0 \\
                                                         \end{array}
                                                       \right)$,

which is not even normal (in fact, not quite anything as such a
matrix is necessarily complex symmetric, as in Theorem 5 in
\cite{Garcia-IEOT T2=0 T complex symmetr}).
                                                       On the other
                                                       hand, the
                                                       equation
                                                       $T^2=I$, still in a bi-dimensional space,  can
                                                       have an infinitude of self-adjoint
                                                       solutions.
                                                       See, e.g., Question
                                                       6.2.1 in
                                                       \cite{Mortad-cex-BOOK}.
                                                       Also, the
                                                       shift
                                                       operator on
                                                       $\ell^2$ does
                                                       not have any
                                                       square root.
See Problem 151 in \cite{halmos-book-1982}. In
\cite{Mashreghi-Ptak-Ross-sq rt Studia 2023}, the authors provided
complete descriptions of the set of square roots
of certain classical operators, such as the Volterra and the
Ces\`{a}ro operators, among others.

                                                       Some
                                                       authors have
                                                       sought for
                                                       conditions on
                                                       $T$ that
                                                      place it in
                                                       the same
                                                       class as that
                                                       of $S$. For
                                                       example, C. R. Putnam showed in \cite{Putnam-square-rt-s-a-logarithm} that if
$T\in B(H)$ satisfies $T^2=S$, where $S\geq0$ and $\Real T\geq0$,
then $T$ is necessarily the unique positive square root of $S$. He
also obtained in \cite{Putnam-sq-rt-normal} that if $T^2$ is
self-adjoint and $\Real\langle Tx,x\rangle\neq0$ for all $x\in H$,
then $T$ is self-adjoint. Some other results can be consulted in
\cite{Embry nth roots}, \cite{Kurepa-nth root normal Math. Zeit
1962}, \cite{Mortad-square-root-normal}, \cite{Putnam-sq-rt-normal},
and \cite{Radjavi-Rosenthal-sq-roots-normal}. Special cases
involving possibly unbounded operators are discussed in
\cite{Dehimi-Mortad-squares-polynomials},
\cite{Frid-Mortad-Dehimi-Nilpotence-UNBD} and
\cite{Mortad-JMAA-2023}.

Regarding the class of normal operators, it is also known that if
$T^2$ is normal and $\Real(T) \geq 0$, then $T$ is normal. This
result has appeared in at least two papers in the literature
(\cite{Embry 1966 Normality!!} and \cite{Putnam-sq-rt-normal}). We
may also add \cite{Frid-Mortad-Dehimi-Nilpotence-UNBD}, which
discusses both a specific and a more general version; specific
because $T^2=0$, and more general as it deals with $T^n=0$, where
$n\geq3$. We digress to say that this result already appeared in
\cite{Kurepa-nth root normal Math. Zeit 1962} (Lemma 3) but the
authors of \cite{Frid-Mortad-Dehimi-Nilpotence-UNBD} were not aware
of it, and besides, their proof is different, simpler, and also
dealt with unbounded operators.

There are further results that provide conditions for the operator
$T$ to be in the class of, for example, $T^n$. This type of results
differs from what we are after in this manuscript, as we wanted some
results that make sense even for matrices. For example, some of
these conditions involve non-normal operators such as hyponormal or
quasinormal ones, but these results are not applicable when $\dim H$
is finite because, for example, hyponormality is equivalent to
normality in that case. Other results involving unbounded symmetric
and self-adjoint operators are also straightforward when $T\in
B(H)$, as these concepts coincide in such a case.

In the end, this paper aims to continue the investigation, focusing
on the more general and challenging case of higher powers while
mainly restricting ourselves to self-adjoint operators.

\section{The case of the self-adjointness of $T^3$}

It is plain that if $T\in B(H)$ is self-adjoint, then so are all its
powers $T^n$. Now, if $T^2$ is self-adjoint, then $T^{2p}$ is
self-adjoint for all $p\in\N$, but, e.g., $T^3$ need not be
self-adjoint. Similarly, if $T^3$ is self-adjoint, so are $T^{3q}$
for any $q\in\N$ and $T^2$ could be non-self-adjoint. Therefore, the
following simple observation might be helpful.

\begin{pro}
Let $T\in B(H)$ be such that $T^2$ and $T^3$ are self-adjoint. Then
$T^n$ is self-adjoint for any $n\geq4$, with $n\in\N$.
\end{pro}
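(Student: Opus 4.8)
The plan is to reduce everything to two elementary structural facts plus a tiny piece of number theory. First I would recall two standing observations: powers of a self-adjoint operator are again self-adjoint (if $S=S^*$ then $(S^p)^*=(S^*)^p=S^p$), and any two powers of the same operator $T$ commute with one another. Consequently $T^2$ and $T^3$ are commuting self-adjoint operators, and so are all of their own powers $(T^2)^a$ and $(T^3)^b$.

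Next I would invoke the fact that the product of two \emph{commuting} self-adjoint operators is self-adjoint: if $A=A^*$, $B=B^*$ and $AB=BA$, then $(AB)^*=B^*A^*=BA=AB$. This is the workhorse of the argument and the only place where commutativity is genuinely used; it is precisely what fails for a single non-self-adjoint $T$ (which is why $T^2$ alone does not force $T^3$ self-adjoint, as noted just before the statement).

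The key step is then the elementary observation that every integer $n\geq 2$ can be written as $n=2a+3b$ with $a,b\in\N\cup\{0\}$ (in the numerical semigroup generated by $2$ and $3$ the only non-representable positive integer is $1$). Writing any $n\geq 4$ in this form gives
\[
T^n=(T^2)^a(T^3)^b,
\]
which is a product of two commuting self-adjoint operators, hence self-adjoint by the previous paragraph.

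In truth there is no serious obstacle here; the proposition is essentially a bookkeeping consequence of the two facts above. If one prefers to bypass the Frobenius-type remark, an equally short route is strong induction on $n$: the cases $n=2,3$ are exactly the hypotheses, and for $n\geq 4$ one writes $T^n=T^2\,T^{n-2}$, where $T^{n-2}$ is self-adjoint by the induction hypothesis and commutes with $T^2$, so the product is again self-adjoint.
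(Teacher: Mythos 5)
Your proof is correct and follows essentially the same route as the paper: write $n=2a+3b$ and express $T^n=(T^2)^a(T^3)^b$ as a product of commuting self-adjoint operators, each itself a power of a self-adjoint operator. If anything, your version is slightly more careful on the arithmetic point, since allowing $a,b\in\N\cup\{0\}$ is needed to cover $n=4$ (the paper writes $p,q\in\N$, which fails for $n=4$ if $0\notin\N$).
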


\begin{proof}Let $n\geq4$, where $n\in\N$. Write $n=2p+3q$ for some $p,q\in\N$. Since commuting self-adjoint operators are self-adjoint, as are powers of self-adjoint
operators, we have
\[T^n=T^{2p+3q}=T^{2p}T^{3q}=T^{2p}T^{3q}=(T^{2})^p(T^{3})^q,\]
which shows that $T^n$ is self-adjoint.
\end{proof}

\begin{rema}The preceding result for the class of normal operators
first appeared in \cite{Jibril. N power normal operators}, then in
\cite{Alzuraiqi Patel. n normal}. The proof presented here for
self-adjoint operators may also be adapted to the normal ones by
remembering that the product of two commuting normal operators is
normal.
\end{rema}

\begin{rema}
It is worth noting that the previous result and remark also apply to
unbounded operators without any changes or additions.
\end{rema}

From now on, we will be interested in forms of the converse of the
previous result. In the introduction, we have already mentioned that
if $T^2$ is normal while $\Real T\geq0$, then $T$ is normal. This
result is not valid anymore if we replace "normal" with
"self-adjoint" (witness $T=\mi I$). Now, we inquire whether $T$
remains self-adjoint if only $T^3$ is self-adjoint while also
assuming $\Real T\geq0$. The answer is again negative.

\begin{exa}\label{MAIN EXAMPLllessssesssssss EXAmple}
For instance, consider
\[T=\left(
      \begin{array}{cc}
        0 & \mi \\
        \mi & 1 \\
      \end{array}
    \right),
\]
which is defined on $\C^2$. Then $T^3=\left(
      \begin{array}{cc}
        -1 & 0 \\
        0 & -1 \\
      \end{array}
    \right)$, which is self-adjoint, but $T$ itself is not. Observe, in the
    end, that $\Real T=\left(
               \begin{array}{cc}
                 0 & 0 \\
                 0 & 1/2 \\
               \end{array}
             \right)$, which is clearly positive semi-definite.
\end{exa}

In the above example, $T^2=\left(
      \begin{array}{cc}
        -1 & \mi \\
        \mi & 0 \\
      \end{array}
    \right)$ indicating that $T^2$ is not self-adjoint. Therefore, we are inclined to propose the following result:

\begin{thm}\label{T3 T2 self-adj T s.ADJ if re T geq0 THMMmm}
Let $T=A+\mi B$, where $A,B\in B(H)$ are self-adjoint. If $T^3$ and
$T^2$ are self-adjoint with $A\geq0$ (or $A\leq0$), then $T$ is
self-adjoint.
\end{thm}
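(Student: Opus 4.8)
The plan is to work entirely with the Cartesian decomposition $T = A + \mi B$ and to translate each hypothesis into an algebraic identity between the self-adjoint operators $A$ and $B$, the objective being to force $B = 0$. First I would expand $T^2 = (A^2 - B^2) + \mi(AB + BA)$. Since both $A^2 - B^2$ and $AB + BA$ are self-adjoint, this is precisely the Cartesian decomposition of $T^2$, so the self-adjointness of $T^2$ is equivalent to the vanishing of its imaginary part, i.e.\ to the anticommutation relation $AB + BA = 0$.

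Next I would draw out the consequences of this anticommutation. Multiplying $AB + BA = 0$ on the left by $A$ and, separately, on the right by $A$, and subtracting the two resulting identities, gives $A^2 B = B A^2$; that is, $B$ commutes with $A^2$. This is where the positivity hypothesis enters: since $A \geq 0$, the operator $A$ is the unique positive square root of the positive operator $A^2$, and $A = \sqrt{A^2}$ is a norm-limit of polynomials in $A^2$ (by Weierstrass approximation of $\sqrt{t}$ on $[0,\|A^2\|]$). Each such polynomial commutes with $B$, and commutation is preserved under norm limits, so $B$ commutes with $A$ itself. Combining $AB = BA$ with the anticommutation $AB = -BA$ forces $2AB = 0$, whence $AB = BA = 0$. (If instead $A \leq 0$, one replaces $A$ by $-A \geq 0$, which has the same square, and the same conclusion follows.)

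It then remains to invoke the third power. With $AB = BA = 0$ the cross terms collapse, and a direct computation gives $T^3 = A^3 - \mi B^3$; the self-adjointness of $T^3$ thus forces $B^3 = 0$. Since $B$ is self-adjoint one has $\|B\|^3 = \|B^3\| = 0$, so $B = 0$, and therefore $T = A$ is self-adjoint, as desired.

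I expect the crux to be the middle step, namely upgrading ``$B$ commutes with $A^2$'' to ``$B$ commutes with $A$'': this is the only place the sign condition on $A$ is genuinely used, and it is indispensable. Indeed, the anticommutation relation alone is perfectly compatible with $T$ not being self-adjoint (take $A = 0$ and any nonzero self-adjoint $B$, so that $T^2 = -B^2$ is self-adjoint while $T = \mi B$ is not); it is exactly the positivity of $A$, together with the extra leverage supplied by $T^3$, that rules this out. The surrounding steps are routine once $A$ and $B$ are seen to commute.
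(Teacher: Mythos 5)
Your proof is correct and follows essentially the same route as the paper's first proof: Cartesian decomposition, extracting $AB+BA=0$ from the self-adjointness of $T^2$, deducing $A^2B=BA^2$ and then $AB=BA=0$ via the positive square root, and finally killing $B^3$ using the self-adjointness of $T^3$. The only differences are cosmetic --- you establish $AB=BA=0$ before touching $T^3$ (the paper extracts the imaginary part of $T^3$ first and substitutes afterwards), and you spell out the polynomial-approximation argument for $\sqrt{A^2}$ and the norm identity $\|B^3\|=\|B\|^3$, which the paper leaves implicit.
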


We present two proofs: the first is based on Cartesian
decomposition, while the second is somewhat simple but relies on a
result by M. R. Embry.

\begin{proof}[First proof of Theorem \ref{T3 T2 self-adj T s.ADJ if re T geq0 THMMmm}] It is easy to see that
\[T^2=A^2-B^2+\mi(AB+BA)\]
and
\[T^3=A^3-B^2A-AB^2-BAB+\mi(A^2B-B^3+ABA+BA^2).\]
Since $A^3-B^2A-AB^2-BAB$ and $A^2B-B^3+ABA+BA^2$ are self-adjoint,
they form the real and imaginary parts of  $T^3$ respectively.

Given that $T^3$ is self-adjoint, its imaginary part must vanish,
leading to $B^3=A^2B+ABA+BA^2$. Furthermore, $T^2$ is self-adjoint;
we must have $AB+BA=0$, or $AB=-BA$. Thus, $A^2B=BA^2$. If $A\geq0$,
then $AB=BA$; and if $A\leq 0$, we obtain $-AB=-BA$. In either case,
$AB=BA$. Consequently, $AB=0$. Returning to the imaginary part of
$T^3$, we see that $B^3=0$, from which we derive $B=0$. Thus, $T$ is
self-adjoint.
\end{proof}

\begin{proof}[Second proof of Theorem \ref{T3 T2 self-adj T s.ADJ if re T geq0 THMMmm}]
Since $T^2$ is self-adjoint, the equation $T^2T=TT^2$ gives
$T^2T^*=T^*T^2$. So
\[T^*TTT^*=T^*T^*TT=TT^*T^*T~(=T^4)\]
(such a $T$ is commonly known as binormal). Thus,
$|T||T^*|=|T^*||T|$. When $\Real T\geq0$, Theorem 2 in \cite{Embry
1966 Normality!!} implies the normality of $T$. By replacing $T$
with $-T$, we observe that the condition $\Real T\leq0$ also ensures
the normality of $T$.

We now prove that $T$ is self-adjoint. Since $T$ is normal, it
suffices to show that its spectrum is a subset of $\R$. Let
$\lambda\in\sigma(T)$. Since both $T^2$ and $T^3$ are self-adjoint,
it follows by the spectral mapping theorem that $\lambda^2$ and
$\lambda^3$ are real, which means that $\lambda$ must also be real.
Therefore, we conclude that $T$ is self-adjoint.
\end{proof}

\begin{rema}
Notice that the self-adjointness of $T^2$ and $T^3$ does not even
yield the normality of $T\in B(H)$. For instance, any nilpotent
non-zero $2\times 2$ matrix is a counterexample.
\end{rema}

\begin{rema}
Example \ref{MAIN EXAMPLllessssesssssss EXAmple} already supplies a
non-normal matrix $T$ such that $T^2$ is not normal either, yet
$T^3$ is self-adjoint and $\Real T\geq0$. This example is somehow
discouraging as we also have $\Real T^2\leq0$. Thus, it seems hard
to obtain the normality of a $T$ if $T^n$ is normal for some
$n\geq3$ and if $\Real T^m\geq0$ (or $\leq0$) for a certain (or all)
$m$.
\end{rema}

A similar reasoning yields the following result:

\begin{thm}
Let $T=A+\mi B$, where $A,B\in B(H)$ are self-adjoint. If $T^3$ and
$T^2$ are self-adjoint with $B\geq0$ (or $B\leq0$), then $T$ is
self-adjoint.
\end{thm}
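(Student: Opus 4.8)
The plan is to follow the first proof of Theorem \ref{T3 T2 self-adj T s.ADJ if re T geq0 THMMmm} with the roles of $A$ and $B$ interchanged. Starting from the same expansions
\[T^2=A^2-B^2+\mi(AB+BA)\]
and
\[T^3=A^3-B^2A-AB^2-BAB+\mi(A^2B-B^3+ABA+BA^2),\]
whose displayed real and imaginary parts are each self-adjoint, the self-adjointness of $T^2$ forces its imaginary part to vanish, i.e.\ $AB+BA=0$, or $AB=-BA$, while that of $T^3$ forces the relation $B^3=A^2B+ABA+BA^2$.

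Next I would convert the anticommutation $AB=-BA$ into a commutation with $B^2$: right-multiplying by $B$ gives $AB^2=-BAB=B(-AB)=B(BA)=B^2A$, so $A$ commutes with $B^2$. The decisive step, and the one I expect to be the main obstacle, is to upgrade this to $AB=BA$ using the sign hypothesis on $B$. When $B\geq0$, the operator $B$ is the unique positive square root of $B^2$ and hence lies in the closed subalgebra generated by $B^2$; consequently $B$ commutes with every operator commuting with $B^2$, and in particular $AB=BA$. When $B\leq0$, I would apply the same remark to $-B=|B|\geq0$, which has the same square, to obtain $AB=BA$ again. This square-root argument is exactly where positivity (or negativity) of $B$ is used, and it is the crux of the proof; everything else is formal manipulation.

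Once $AB=BA$ is secured, combining it with $AB=-BA$ yields $2AB=0$, so $AB=BA=0$. Substituting into $B^3=A^2B+ABA+BA^2$ annihilates every term on the right, since each factors through $AB$ or $BA$, leaving $B^3=0$. As $B$ is self-adjoint, $\|B\|^3=\|B^3\|=0$, whence $B=0$ and $T=A$ is self-adjoint. I would also note that the second, spectral proof of Theorem \ref{T3 T2 self-adj T s.ADJ if re T geq0 THMMmm} adapts here by passing to $\mi T$: since $(\mi T)^2=-T^2$ is self-adjoint, $\mi T$ is again binormal, and $\Real(\mi T)=-B\leq0$ when $B\geq0$, so Theorem~2 in \cite{Embry 1966 Normality!!} gives the normality of $\mi T$, hence of $T$, after which the spectral mapping argument on $\sigma(T)$ applies verbatim.
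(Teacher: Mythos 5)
Your proof is correct and is precisely the ``similar reasoning'' the paper intends for this theorem: you adapt the first proof of Theorem \ref{T3 T2 self-adj T s.ADJ if re T geq0 THMMmm} with the roles of $A$ and $B$ interchanged, turning $AB=-BA$ into $AB^2=B^2A$ and using that a positive (or negative) $B$ commutes with anything commuting with $B^2$ to get $AB=BA=0$, hence $B^3=0$ and $B=0$. Your closing observation that the second, Embry-based proof also carries over by applying it to $\mi T$ matches the paper's alternative route as well.
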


Thus, we have an intriguing way of characterizing the
self-adjointness of $T$ through its real and imaginary parts, which
conclusively eliminates the possibility of finding superior results.

\begin{cor}\label{T3 s-adj T 2s.a. iff T s.a. Re T posi COR}
Let $T=A+\mi B$, where $A,B\in B(H)$ are self-adjoint. Assume that
$T^3$ is self-adjoint. If $A\geq0$ or $A\leq0$ or $B\geq0$ or
$B\leq0$, then
\[T^2 \text{ is self-adjoint}\Longleftrightarrow T \text{ is
self-adjoint.}\]
\end{cor}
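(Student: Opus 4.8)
The plan is to prove the corollary by combining the two preceding theorems with the trivial forward direction. The statement is a biconditional, so I would split into two implications.

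The plan is to prove the corollary by reducing it to the two theorems that precede it, since it is essentially a repackaging of their content. The statement to be shown is a biconditional, so I would treat its two implications separately. Throughout, I would keep the standing hypotheses in view: $T^3$ is self-adjoint, and at least one of the four sign conditions $A\geq0$, $A\leq0$, $B\geq0$, $B\leq0$ holds.

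First I would dispatch the reverse implication, which is immediate and carries no content. If $T$ is self-adjoint, then so is every power of $T$; in particular $T^2$ is self-adjoint. This direction requires neither the hypothesis on $T^3$ nor any of the positivity assumptions.

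For the forward implication I would assume in addition that $T^2$ is self-adjoint, so that both $T^2$ and $T^3$ are now self-adjoint. At this point the sign hypothesis splits into two cases. If $A\geq0$ or $A\leq0$, then Theorem \ref{T3 T2 self-adj T s.ADJ if re T geq0 THMMmm} applies verbatim and yields that $T$ is self-adjoint. If instead $B\geq0$ or $B\leq0$, the companion theorem (the one phrased with the hypothesis on $B$ rather than $A$) applies in exactly the same way and again gives that $T$ is self-adjoint. Since the four sign conditions are joined by \emph{or}, one of these two cases must occur, and in each the conclusion follows.

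I do not expect any genuine obstacle here: the corollary is a clean corollary, and the only thing to verify is that the hypotheses of the corollary match the hypotheses of the two invoked theorems once $T^2$ is assumed self-adjoint. The sole point meriting a word of care is the logical bookkeeping of the disjunction, namely that the condition ``$A\geq0$ or $A\leq0$ or $B\geq0$ or $B\leq0$'' partitions into the hypothesis of Theorem \ref{T3 T2 self-adj T s.ADJ if re T geq0 THMMmm} (the two cases on $A$) and the hypothesis of its companion (the two cases on $B$), so that exactly one of the two theorems is available in each scenario and no case is left uncovered.
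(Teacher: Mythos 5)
Your proof is correct and follows exactly the route the paper intends: the corollary is stated there as an immediate consequence of Theorem \ref{T3 T2 self-adj T s.ADJ if re T geq0 THMMmm} and its companion with the hypothesis on $B$, together with the trivial fact that powers of a self-adjoint operator are self-adjoint. Your case split on the disjunction and the observation that the positivity hypotheses are not needed for the reverse direction match the paper's (implicit) argument precisely.
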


Next, we treat the case of positive operators, which, in light of
Theorem \ref{T3 T2 self-adj T s.ADJ if re T geq0 THMMmm}, has now
become straightforward to deal with.

\begin{cor}\label{T3 T2 pos T s.ADJ if re T geq0 cor}
Let $T=A+\mi B$, where $A,B\in B(H)$ are self-adjoint. If $T^3$ and
$T^2$ are positive with $B\geq0$ (or $B\leq0$), then $T$ is
positive.
\end{cor}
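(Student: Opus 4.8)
The plan is to leverage Theorem~\ref{T3 T2 self-adj T s.ADJ if re T geq0 THMMmm} (and its twin with the hypothesis on $B$) to first conclude that $T$ is self-adjoint, and then upgrade self-adjointness to positivity using the hypothesis that $T^2$ and $T^3$ are positive. Observe that positivity is strictly stronger than self-adjointness, so the positivity of $T^3$ and $T^2$ certainly supplies the self-adjointness of $T^3$ and $T^2$ needed to invoke the earlier theorem. With $B\geq0$ (or $B\leq0$) in hand, the second of the two theorems above immediately yields that $T$ is self-adjoint, i.e.\ $B=0$ and $T=A$.

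Once we know $T=A$ is self-adjoint, the task reduces to showing $A\geq0$. The plan is to exploit the spectral information carried by the positivity of the powers. Since $T=A$ is self-adjoint, its spectrum $\sigma(A)$ is a subset of $\R$. First I would note that $T^3=A^3\geq0$ forces $\sigma(A^3)\subseteq[0,\infty)$, and by the spectral mapping theorem $\sigma(A^3)=\{\lambda^3:\lambda\in\sigma(A)\}$. For a real number $\lambda$, the condition $\lambda^3\geq0$ is equivalent to $\lambda\geq0$. Hence every $\lambda\in\sigma(A)$ satisfies $\lambda\geq0$, so $\sigma(A)\subseteq[0,\infty)$, which for a self-adjoint operator is exactly the statement $A\geq0$. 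Thus $T=A\geq0$ is positive.

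I expect the main obstacle to be purely organizational rather than technical: one must be careful to first secure self-adjointness before talking about the (real) spectrum, since the spectral-mapping argument on $\sigma(A)\subseteq\R$ only makes sense after $B=0$ has been established. A subtlety worth flagging is that the positivity of $T^2=A^2$ is automatic once $T=A$ is self-adjoint (squares of self-adjoint operators are always positive), so the hypothesis that $T^2$ is positive does no extra work beyond furnishing the self-adjointness input for the earlier theorem; the real leverage for positivity comes entirely from $T^3=A^3\geq0$. An alternative to the spectral route would be to argue directly that $A^3\geq0$ together with self-adjointness of $A$ implies $A\geq0$ via the uniqueness of positive roots, but the spectral-mapping argument is the cleanest and is already the technique used in the second proof of Theorem~\ref{T3 T2 self-adj T s.ADJ if re T geq0 THMMmm}.
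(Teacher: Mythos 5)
Your proof is correct and takes essentially the same route as the paper: first invoke the earlier self-adjointness theorem (positivity being stronger than self-adjointness) to conclude $T=A$ is self-adjoint, then apply the spectral mapping theorem to the positive powers to get $\sigma(T)\subseteq[0,\infty)$. The only cosmetic difference is that the paper treats $\lambda\in\sigma(T)$ as a priori complex and uses $\lambda^2\geq0$ to force realness before using $\lambda^3\geq0$, whereas you observe that self-adjointness already makes the spectrum real and extract positivity from $\lambda^3\geq0$ alone -- a marginally tidier bookkeeping of the same argument.
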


\begin{proof}
Since positivity is stronger than self-adjointness, by Theorem
\ref{T3 T2 self-adj T s.ADJ if re T geq0 THMMmm}, we can conclude
that $T$ is self-adjoint. Let $\lambda$ be an a priori complex
number in  $\sigma(T)$. Since $T^2,T^3\geq0$, we can infer that
$\lambda^2,\lambda^3\geq0$. Since the condition $\lambda^2\geq0$
implies the realness of $\lambda$, $\lambda^3\geq0$ yields
$\lambda\geq0$, which means that $T$ is positive, marking the end of
the proof.
\end{proof}

\begin{rema}
If we assume that $\Real T\geq0$, then $T$ is positive if $T^2$ is
positive, that is, without needing to add the positivity of $T^3$ (a
similar conclusion is reached if we suppose $\Real T\leq0$ in lieu).
We already alluded that this result appeared in
\cite{Putnam-square-rt-s-a-logarithm}. To show that the above
corollary is not covered by the result of
\cite{Putnam-square-rt-s-a-logarithm}, let $T=-I$. Then $T^2\geq0$
and $\Ima T=0$, yet $T$ is not positive, hence the need for assuming
$T^3\geq0$.
\end{rema}

If the positivity of the real or imaginary part of $T$ is dropped,
then $T$ remains self-adjoint but under a specific assumption. For
instance, if $T\in B(H)$ is invertible and $T^2$ and $T^3$ are
self-adjoint, then $T$ is also self-adjoint. This was demonstrated
in \cite{Dehimi-Mortad-squares-polynomials} for two relatively prime
numbers (which need not be just 2 and 3) and for an unbounded
self-adjoint $T$. The subsequent result represents a slight
improvement of this finding.

\begin{pro}
Let $T\in B(H)$ be such that $T^2$ and $T^3$ are self-adjoint. If
$\ker T=\ker T^2$ or $\ker T^*=\ker T^2$, then $T$ is self-adjoint.
\end{pro}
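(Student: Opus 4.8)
The plan is to reuse the Cartesian machinery from the first proof of Theorem \ref{T3 T2 self-adj T s.ADJ if re T geq0 THMMmm} and to let the kernel hypothesis play the role that the sign condition on $A$ played there. Writing $T=A+\mi B$ with $A,B$ self-adjoint, the self-adjointness of $T^2$ again yields $AB=-BA$, while that of $T^3$ yields $B^3=A^2B+ABA+BA^2$. Using $AB=-BA$ one simplifies the right-hand side: one gets $A^2B=BA^2$ and $ABA=-A^2B$, so the identity collapses to $B^3=A^2B=BA^2$. In particular $A^2$ commutes with $B$; and from $AB=-BA$ one also obtains $AB^2=B^2A$, so $A$ commutes with $B^2$. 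I would record these two commutation relations first, since they are exactly what will make the relevant products self-adjoint later.

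The key step I would isolate is the identity $T^2B^2=0$. Indeed $T^2=A^2-B^2$ (the imaginary part of $T^2$ vanishes), and multiplying $B^3=A^2B$ by $B$ on the left gives $B^4=A^2B^2$; hence $T^2B^2=A^2B^2-B^4=0$. This says precisely that $\ran B^2\subseteq\ker T^2$, and it is here that the one-sided kernel equality is meant to do the work.

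Next I would split into the two cases. If $\ker T=\ker T^2$, then $\ran B^2\subseteq\ker T$, i.e. $TB^2=0$, which reads $AB^2+\mi B^3=0$. Since $AB^2$ (because $A$ commutes with $B^2$) and $B^3$ are both self-adjoint, this is a Cartesian decomposition of $0$, forcing $B^3=0$. For a self-adjoint $B$, $B^3=0$ gives $B=0$ (e.g. $B^4=(B^2)^2=0$ yields $B^2=0$, then $B=0$), so $T=A$ is self-adjoint. If instead $\ker T^*=\ker T^2$, I would use $T^2=(T^*)^2$ to rewrite $T^2B^2=0$ as $(T^*)^2B^2=0$, whence $\ran B^2\subseteq\ker(T^*)^2=\ker T^2=\ker T^*$ and so $T^*B^2=0$; the same argument applied to $T^*=A-\mi B$ again gives $B^3=0$ and $B=0$.

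The only genuine obstacle is finding the right algebraic identity that converts the hypotheses into a statement about $\ker T^2$: once $T^2B^2=0$ is in hand, the one-sided kernel equality is exactly strong enough to upgrade it to $TB^2=0$ (respectively $T^*B^2=0$), after which the conclusion is the by-now routine fact that a self-adjoint cube-nilpotent operator vanishes. I would take care to verify the two commutations $A^2B=BA^2$ and $AB^2=B^2A$, since the self-adjointness of the factor $AB^2$, and hence the clean splitting of $AB^2+\mi B^3=0$ into real and imaginary parts, rests on them.
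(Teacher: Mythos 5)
Your proof is correct, and it takes a genuinely different route from the paper's. The paper never touches the Cartesian decomposition here: from $T^2T^*=T^*T^2$ and $T^3T^*=T^*T^3$ it derives $T^2(TT^*-T^*T)=0$, uses the kernel hypothesis to cancel one power of $T$ --- yielding $T(TT^*-T^*T)=0$, i.e.\ $T^*$ quasinormal, in the first case, and $T^*(TT^*-T^*T)=0$, i.e.\ $T$ quasinormal, in the second --- and then invokes Stampfli's theorem that a hyponormal operator with real spectrum is self-adjoint, the realness of the spectrum coming from the spectral mapping theorem exactly as in the second proof of Theorem \ref{T3 T2 self-adj T s.ADJ if re T geq0 THMMmm}. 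Your argument replaces this spectral machinery by pure algebra: $AB=-BA$ and $B^3=A^2B=BA^2$ give the key identity $T^2B^2=0$, the kernel hypothesis upgrades it to $TB^2=0$ (resp.\ $T^*B^2=0$), and splitting $AB^2\pm\mi B^3=0$ into its self-adjoint real and imaginary parts (legitimate because you verified $AB^2=B^2A$) forces $B^3=0$, hence $B=0$. It is worth noting that both proofs exploit the hypothesis in exactly the same structural way --- cancelling a factor of $T$ or $T^*$ in an identity of the form $T^2X=0$ --- but with different $X$: your $X=B^2$ versus the paper's self-commutator $X=TT^*-T^*T$. What your route buys is self-containedness: beyond the uniqueness of the Cartesian decomposition and the fact that a self-adjoint $B$ with $B^3=0$ must vanish, it needs nothing, whereas the paper's route is shorter but leans on Stampfli's result and on spectral theory; your version would also be the natural one to attempt in settings where spectral arguments are awkward.
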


\begin{proof}Since $T^2$  and $T^3$ are self-adjoint, we have
$T^2T^*=T^*T^2$ and $T^3T^*=T^*T^3$. Thus,
\[T^3T^*=T^*T^3=T^*T^2T=T^2T^*T,\]
that is, $T^2(TT^*-T^*T)=0$. Since $\ker T=\ker T^2$, we even have
$T(TT^*-T^*T)=0$, or equivalently, $TTT^*=TT^*T$. This says that
$T^*$ is quasinormal. But a hyponormal operator, which is a weaker
notion than quasinormality, is self-adjoint as soon as it has a real
spectrum (see \cite{Stampfli hyponormal 1965}). Arguing as in the
second proof of Theorem \ref{T3 T2 self-adj T s.ADJ if re T geq0
THMMmm}, we can establish the self-adjointness of $T^*$ or $T$.

In case $\ker T^*=\ker T^2$, we have
\[T^2(TT^*-T^*T)=0 \Longrightarrow T^*(TT^*-T^*T)=0,\] which means that $T$ is
quasinormal. Thus, and as above, $T$ is self-adjoint.
\end{proof}

What other generalizations of Theorem \ref{T3 T2 self-adj T s.ADJ if
re T geq0 THMMmm} are possible? One could conjecture the following:

\begin{conj}If the real part of $T\in B(H)$ is positive and both $T^3$ and $T^4$ are
self-adjoint, then $T$ is self-adjoint.
\end{conj}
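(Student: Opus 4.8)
The plan is to adapt the strategy of the second proof of Theorem \ref{T3 T2 self-adj T s.ADJ if re T geq0 THMMmm}, but with a twist: since $T^2$ is no longer assumed self-adjoint, I cannot extract binormality and feed it into Embry's theorem. Instead I would first squeeze two purely algebraic identities out of the \emph{joint} self-adjointness of $T^3$ and $T^4$, and then split $H$ into a piece on which $T$ is automatically self-adjoint and a piece on which $T$ is nilpotent of order $3$.

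First I would exploit the fact that $T^4$ can be read as $T\,T^3$ and as $T^3T$. Writing $T=A+\mi B$ and using $(T^3)^*=T^3$ together with $(T^4)^*=T^4$, one has $(T^*)^3=(T^3)^*=T^3$, hence $T^4=(T^*)^4=T^*(T^*)^3=T^*T^3$ and likewise $T^4=T^3T^*$. Subtracting these from $T^4=TT^3=T^3T$ gives $(T-T^*)T^3=0$ and $T^3(T-T^*)=0$, that is, $BT^3=T^3B=0$. Adding them instead yields $AT^3=T^3A=T^4$, so $A$ commutes with $T^3$. I would emphasize here that this is exactly where $T^4$ is indispensable: Example \ref{MAIN EXAMPLllessssesssssss EXAmple} (whose $T^4$ is \emph{not} self-adjoint) shows that $T^3$ self-adjoint and $\Real T\geq0$ alone force neither $BT^3=0$ nor the normality of $T$, so any attempt to apply a generic ``$T^n$ normal plus $\Real T\geq0$ implies $T$ normal'' criterion directly to $T$ at $n=3$ is doomed.

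Next I would use these relations to reduce $T$. Set $N=\ker T^3$ and $K=\overline{\ran T^3}$; since $T^3$ is self-adjoint, $H=N\oplus K$ orthogonally. The two halves of $BT^3=T^3B=0$ say that $\ran B\subseteq\ker T^3=N$ and that $B$ annihilates $\overline{\ran T^3}=K$, so the self-adjoint operator $B$ reduces $H=N\oplus K$ with $B|_K=0$; and from $AT^3=T^3A$ one gets $A(N)\subseteq N$, whence the self-adjoint $A$ reduces it as well. Consequently $T=A+\mi B$ reduces, $T=T_N\oplus T_K$, where $T_K=A|_K$ is self-adjoint, while on $N=\ker T^3$ one has $T_N^3=0$ and $\Real T_N=A|_N\geq0$.

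It then remains to show $T_N=0$, which is precisely the assertion that a cube-nilpotent operator with positive real part must vanish; this is the case $n=3$ of the result recalled in the Introduction (\cite{Frid-Mortad-Dehimi-Nilpotence-UNBD}, \cite{Kurepa-nth root normal Math. Zeit 1962}): if $S^n=0$ and $\Real S\geq0$ then $S$ is normal, hence $S=0$. Applying it to $S=T_N$ gives $T_N=0$, so $B|_N=0$, and together with $B|_K=0$ this forces $B=0$, i.e. $T=A$ is self-adjoint. The main obstacle I anticipate is the first reduction step: deriving $BT^3=T^3B=0$ (and thereby isolating the nilpotent summand) is the substantive move, whereas the closing step merely invokes the known nilpotent lemma; if one preferred a self-contained argument there, the lemma can be proved directly by writing $T_N$ in strictly upper-triangular block form relative to the flag $\ker T_N\subseteq\ker T_N^2\subseteq\ker T_N^3=N$ and checking that $\Real T_N\geq0$ kills every super-diagonal block.
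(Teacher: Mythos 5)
There is nothing in the paper to compare your argument against: the statement is left as an open \emph{conjecture}, and the closest the paper comes to it is the subsequent Proposition, which adds the extra hypothesis $\Real(T^2)\geq0$ so that the self-adjointness of $T^2$ can be extracted first (via Corollary \ref{T3 s-adj T 2s.a. iff T s.a. Re T posi COR}) and then fed into Theorem \ref{T3 T2 self-adj T s.ADJ if re T geq0 THMMmm}. Your argument dispenses with that extra hypothesis, and as far as I can verify it is correct, so it settles the conjecture affirmatively. The opening algebra is sound: from $(T^*)^3=(T^3)^*=T^3$ and $T^4=(T^4)^*=(T^*)^4$ one gets $T^4=T^*T^3=T^3T^*$, and comparing with the trivial identities $T^4=TT^3=T^3T$ yields $BT^3=T^3B=0$ and $AT^3=T^3A=T^4$, where $A=\Real T$, $B=\Ima T$. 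Since $T^3$ is self-adjoint, $H=N\oplus K$ orthogonally with $N=\ker T^3$ and $K=\overline{\ran T^3}$; your relations show that $A$ and $B$ (hence $T$ and $T^*$) reduce this decomposition, that $T|_K=A|_K$ is self-adjoint, and that $T|_N$ is cube-nilpotent with $\Real(T|_N)=A|_N\geq0$. The last step is exactly the result quoted in the paper's introduction, Lemma 3 of \cite{Kurepa-nth root normal Math. Zeit 1962} (see also \cite{Frid-Mortad-Dehimi-Nilpotence-UNBD}): a nilpotent operator with positive real part is zero; hence $T|_N=0$, so $B$ vanishes on both summands and $T=A$. (Your fallback flag argument for that lemma also works: in the block decomposition along $\ker T|_N\subseteq\ker T|_N^2\subseteq N$ the operator $T|_N$ is strictly upper triangular, so $\Real(T|_N)\geq0$ has zero diagonal blocks and must vanish, forcing $T|_N=0$.) It is worth noting that the paper's own $4\times4$ example is fully consistent with your reduction: there $N$ carries the Jordan block and $\Real(T|_N)$ has eigenvalues $\pm\frac{\sqrt{2}}{2}$, which is precisely why that example cannot contradict the conjecture. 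In short, where the paper's partial result forces self-adjointness of $T^2$ at the cost of assuming $\Real(T^2)\geq0$, your route isolates the obstruction in a nilpotent corner and eliminates it with a lemma the authors themselves cite, upgrading their conjecture to a theorem.
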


Before attempting to address this conjecture, however, readers
should first consider the following example:

\begin{exa}
Let
\[T=\left(
      \begin{array}{cccc}
        0 & 1 & 0 & 0 \\
        0 & 0 & 1 & 0 \\
        0 & 0 & 0 & 0 \\
        0 & 0 & 0 & 1 \\
      \end{array}
    \right),
\]
which is not self-adjoint. Then $T^2$ is not self-adjoint because
\[T^2=\left(
      \begin{array}{cccc}
        0 & 1 & 0 & 0 \\
        0 & 0 & 0 & 0 \\
        0 & 0 & 0 & 0 \\
        0 & 0 & 0 & 1 \\
      \end{array}
    \right).\]
    However,
\[T^3=T^4=\left(
      \begin{array}{cccc}
        0 & 0 & 0 & 0 \\
        0 & 0 &0 & 0 \\
        0 & 0 & 0 & 0 \\
        0 & 0 & 0 & 1 \\
      \end{array}
    \right)\]
    are self-adjoint (and idempotent as well). Observe in the end that the self-adjoint $\Real T$ is not positive (or negative) semi-definite, as it has eigenvalues of opposite signs,
    namely $\frac{\sqrt{2}}{2}$ and $-\frac{\sqrt{2}}{2}$.
\end{exa}

Therefore, this example does not answer the conjecture above, but it
could be useful for other purposes. Still related to the above
conjecture, we have the following observation:

\begin{pro}
Let $T\in B(H)$ be such that $T^3$ and $T^4$ are self-adjoint. If
$\Real T\geq 0$ and $\Real(T^2)\geq 0$, then $T$ is self-adjoint.
\end{pro}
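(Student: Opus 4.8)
The plan is to reduce everything to the two normality results already available in the paper and then finish with a spectral argument. Concretely, I would first upgrade the hypotheses to the assertion that $T$ is normal, and then show that the self-adjointness of $T^3$ and $T^4$ forces the spectrum of $T$ to be real; since a normal operator with real spectrum is self-adjoint, this would close the proof. The key structural point is that I do not expect to get normality of $T$ straight out of $T^3$, but rather by passing through the power $T^2$.

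The crucial move is to apply the binormal/Embry machinery of the second proof of Theorem \ref{T3 T2 self-adj T s.ADJ if re T geq0 THMMmm} not to $T$ but to $S:=T^2$. Since $T^4=(T^2)^2=S^2$ is self-adjoint, we have $(S^*)^2=S^2$, and repeating the computation of that proof verbatim would give $S^*SSS^*=SS^*S^*S=S^4$, i.e. $S$ is binormal and $|S|$ commutes with $|S^*|$. Because $\Real S=\Real(T^2)\geq0$ by hypothesis, Theorem 2 of \cite{Embry 1966 Normality!!} then yields that $S=T^2$ is normal. At this stage $T^2$ is normal and $\Real T\geq0$, so the result recalled in the introduction (from \cite{Embry 1966 Normality!!} and \cite{Putnam-sq-rt-normal}) applies and promotes this to the normality of $T$ itself.

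It would then remain to prove $\sigma(T)\subseteq\R$. Let $\lambda\in\sigma(T)$. By the spectral mapping theorem $\lambda^3\in\sigma(T^3)$ and $\lambda^4\in\sigma(T^4)$, and since $T^3$ and $T^4$ are self-adjoint we get $\lambda^3\in\R$ and $\lambda^4\in\R$. If $\lambda=0$ then $\lambda\in\R$ trivially; otherwise $\lambda=\lambda^4/\lambda^3\in\R$. Hence $\sigma(T)\subseteq\R$, and a normal operator with real spectrum is self-adjoint, giving $T=T^*$.

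I expect the main obstacle to be the first step rather than the spectral endgame: one must resist trying to deduce the normality of $T$ directly from $T^3$ (the remark following Theorem \ref{T3 T2 self-adj T s.ADJ if re T geq0 THMMmm} warns that this seems out of reach), and instead route through $T^2$. The hypothesis $\Real(T^2)\geq0$ is precisely what powers Embry's theorem for $S=T^2$, while $\Real T\geq0$ is what subsequently lifts normality of $T^2$ to normality of $T$. Dropping either positivity assumption should break the argument, as nilpotent examples already show.
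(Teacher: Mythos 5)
Your proof is correct, but it follows a genuinely different route from the paper's. The paper's own proof is a double application of its Corollary \ref{T3 s-adj T 2s.a. iff T s.a. Re T posi COR}: first to $S=T^2$ (noting that $S^3=T^6$ is self-adjoint because $T^3$ is, that $S^2=T^4$ is self-adjoint by hypothesis, and that $\Real S\geq0$), which yields that $T^2$ is \emph{self-adjoint}; then to $T$ itself (with $T^3$ and $T^2$ self-adjoint and $\Real T\geq0$), which yields that $T$ is self-adjoint. That argument stays entirely inside the paper's established results and, in particular, can rest on the purely algebraic Cartesian-decomposition proof of Theorem \ref{T3 T2 self-adj T s.ADJ if re T geq0 THMMmm}, with no spectral theory needed. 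You instead never establish self-adjointness of $T^2$, only its \emph{normality}, via the binormality computation applied to $S=T^2$ (using only $T^4$ self-adjoint) together with Embry's Theorem 2; you then invoke the external Embry--Putnam result ($T^2$ normal and $\Real T\geq0$ imply $T$ normal), which the paper cites in its introduction but does not use in this proof, and you finish with a single spectral mapping argument using both powers $3$ and $4$. Each approach has its merits: the paper's is shorter given its prior results and is spectral-theory-free, while yours isolates an intermediate statement of independent interest --- namely that $T^4$ self-adjoint, $\Real(T^2)\geq0$ and $\Real T\geq0$ already force $T$ to be normal, the hypothesis on $T^3$ entering only at the final step to pin the spectrum inside $\R$.
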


\begin{proof}
Since $T^3$ is self-adjoint, so is $T^6$. Thus, by Corollary \ref{T3
s-adj T 2s.a. iff T s.a. Re T posi COR} and the self-adjointness of
$T^4$, $T^2$ too is self-adjoint as $\Real(T^2)\geq0$. Since $\Real
T\geq0$, the self-adjointness of both $T^2$ and $T^3$ yields that of
$T$ using the same corollary.
\end{proof}

\begin{rema}
The conclusion of the previous result stays unchanged when $\Ima T$
(or $\Ima T^2$) substitutes $\Real T$ (or $\Real T^2$) and when
"$\geq0$" is exchanged with "$\leq0$".
\end{rema}

\section{The general case of an odd power}

\begin{thm}\label{hhichhemmmmmmmmmmmmmm THMMMMmmmm}
Let $T=A+\mi B$, where $A,B\in B(H)$ are self-adjoint. Let $n\in\N$
be such that $n\geq 2$. Suppose $T^{2n+1}$ and $T^2$ are
self-adjoint with $A\geq0$. Then $T$ is self-adjoint.
\end{thm}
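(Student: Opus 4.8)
The plan is to reproduce the structure of the second proof of Theorem~\ref{T3 T2 self-adj T s.ADJ if re T geq0 THMMmm}, the point being that its binormality step uses only the self-adjointness of $T^2$ and is completely insensitive to which odd power is imposed. First I would note that, since $T^2$ is self-adjoint, we have $(T^*)^2=T^2$, while taking adjoints in $T^2T=TT^2$ gives $T^2T^*=T^*T^2$; combining these, both $T^*TTT^*$ and $TT^*T^*T$ reduce to $T^4$, so $T$ is binormal and hence $|T|$ and $|T^*|$ commute. With the hypothesis $A=\Real T\geq0$, Theorem~2 of \cite{Embry 1966 Normality!!} then yields the normality of $T$.

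Once $T$ is known to be normal, it suffices to prove $\sigma(T)\subseteq\R$, after which normality together with a real spectrum forces self-adjointness. Let $\lambda\in\sigma(T)$. The spectral mapping theorem, applied to the self-adjoint operators $T^2$ and $T^{2n+1}$, gives $\lambda^2\in\R$ and $\lambda^{2n+1}\in\R$. The decisive observation is that $2$ and $2n+1$ are coprime: for $\lambda\neq0$ the identity $2(-n)+(2n+1)=1$ yields
\[\lambda=(\lambda^2)^{-n}\,\lambda^{2n+1}=\frac{\lambda^{2n+1}}{(\lambda^2)^n},\]
a quotient of real numbers with nonzero denominator, hence $\lambda\in\R$; and $\lambda=0$ is trivially real. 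Thus $\sigma(T)\subseteq\R$ and $T$ is self-adjoint.

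I expect no serious obstacle here; the computations are light, and the only point requiring a little care is the spectral step, where $\lambda=0$ must be separated out to avoid dividing by $(\lambda^2)^n=0$, and where the conceptual content is simply $\gcd(2,2n+1)=1$---this is exactly what generalizes the cancellation $\lambda=\lambda^3/\lambda^2$ used when $2n+1=3$.

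For completeness I would also record a Cartesian variant. Self-adjointness of $T^2$ forces $AB+BA=0$, whence $A^2B=BA^2$; since $A\geq0$ we have $A=(A^2)^{1/2}$, which commutes with everything that commutes with $A^2$, so $B$ commutes with $A$, and together with $AB=-BA$ this gives $AB=BA=0$. The cross terms then drop out of the expansion of $T^{2n+1}$, leaving $T^{2n+1}=A^{2n+1}+(-1)^n\mi\,B^{2n+1}$; self-adjointness annihilates the imaginary part, so $B^{2n+1}=0$, and a self-adjoint $B$ with a vanishing odd power must be $0$. Hence $T=A$ is self-adjoint.
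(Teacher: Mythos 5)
Your proposal is correct --- in fact twice over, since your closing ``Cartesian variant'' is, almost word for word, the proof the paper actually gives for this theorem. The paper argues purely algebraically: self-adjointness of $T^2$ forces $AB+BA=0$; positivity of $A$ upgrades this to $AB=BA=0$ (via $A^2B=BA^2$ and the fact that $A=(A^2)^{1/2}$ commutes with whatever commutes with $A^2$, exactly as in the first proof of Theorem \ref{T3 T2 self-adj T s.ADJ if re T geq0 THMMmm}); then every mixed word in $A$ and $B$ inside $T^{2n+1}$ vanishes, the imaginary part collapses to $\pm B^{2n+1}=0$, and $B=0$ follows. Your primary route is genuinely different: it generalizes the \emph{second} proof of Theorem \ref{T3 T2 self-adj T s.ADJ if re T geq0 THMMmm}, getting binormality from $(T^*)^2=T^2$ and $T^2T^*=T^*T^2$, invoking Theorem 2 of \cite{Embry 1966 Normality!!} together with $\Real T\geq0$ to obtain normality, and then using the spectral mapping theorem plus the identity $\lambda=\lambda^{2n+1}/(\lambda^2)^n$ (with $\lambda=0$ correctly set aside) to force $\sigma(T)\subseteq\R$. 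What your route buys is conceptual transparency: it isolates the arithmetic heart of the spectral step, namely $\gcd(2,2n+1)=1$, while the binormality step depends only on $T^2$ being self-adjoint and not on which odd power is imposed. What it costs is reliance on two external inputs --- Embry's theorem and the fact that a normal operator with real spectrum is self-adjoint --- whereas the paper's Cartesian computation, which you also supply, is elementary and self-contained.
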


\begin{proof}Since $T^2$ is self-adjoint, we have $AB+BA=0$. As shown in the first proof of Theorem \ref{T3 T2 self-adj T s.ADJ if re T geq0 THMMmm}, we can deduce
that $BA=AB=0$. To find the imaginary part of
$T^{2n+1}$, which is nil, there is no need to expand $T^{2n+1}$
using the Binomial theorem. It is evident that each expression of
the form $A^pB^q$, for any $p,q\in\N$, is zero due to the condition
$AB=BA=0$. Thus, we are left with $B^{2n+1}=0$, which then leads to
$B=0$, making $T$ self-adjoint, as wished.
\end{proof}

\begin{rema}
It is primordial to have an odd power in the preceding theorem.
Indeed, if $T=\left(
                \begin{array}{cc}
                  \mi & 0 \\
                  0 & -\mi \\
                \end{array}
              \right)$. Then $T^2=-I$ and $T^4=I$ are self-adjoint,
              $\Real T=0$, and yet $T$ is not self-adjoint. What
              went wrong is the fact that $T^3=\left(
                \begin{array}{cc}
                  -\mi & 0 \\
                  0 & \mi \\
                \end{array}
              \right)$ is not self-adjoint.
\end{rema}

\begin{rema}
Here, too, and as in Corollary \ref{T3 s-adj T 2s.a. iff T s.a. Re T
posi COR}, if $T^{2n+1}$ is self-adjoint and $\Real T\geq0$, then
$T$ is self-adjoint if and only if $T^2$ is self-adjoint.
\end{rema}

\section{The case of nilpotent operators}

If $T\in B(H)$ is such that $T^3=0$, then there is, a priori, no
reason why we should have $T^2=0$, unless $\dim H=2$. The following
result provides a way of reducing the index of nilpotence for an
operator on an infinite-dimensional space.

\begin{thm}\label{T3=0 gives T2=0 if re T2 positive THmmMM}
Let $T\in B(H)$ be such that $T^3=0$, where $\dim H\geq3$. If
$\Real(T^2)\geq0$ or $\Ima (T^2)\geq0$, then $T^2=0$.
\end{thm}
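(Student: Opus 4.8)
The plan is to reduce everything to the square $S=T^2$. Since $T^3=0$, we have $T^4=T\cdot T^3=0$, so $S$ satisfies $S^2=0$; the whole problem thus becomes to show that a bounded operator $S$ with $S^2=0$ and $\Real(S)\geq0$ (or $\Ima(S)\geq0$) must vanish. I would attack this through the Cartesian decomposition $S=P+\mi Q$, with $P=\Real S$ and $Q=\Ima S$ self-adjoint.

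First I would expand $S^2=(P^2-Q^2)+\mi(PQ+QP)$ and, since $S^2=0$ while both $P^2-Q^2$ and $PQ+QP$ are self-adjoint, read off the two relations $P^2=Q^2$ and $PQ+QP=0$. Assume now $P\geq0$. The key observation is that $P$ is then a positive square root of $Q^2$, so by the uniqueness of the positive square root $P=(Q^2)^{1/2}=|Q|$. Because $|Q|$ is obtained from $Q$ by the continuous functional calculus, it commutes with $Q$; hence the anticommutation relation $PQ+QP=0$ becomes $|Q|Q+Q|Q|=2|Q|Q=0$, that is, $|Q|Q=0$.

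The last step is spectral: since $|Q|Q$ acts as multiplication by $|\lambda|\lambda$ on the spectral resolution of $Q$, and $|\lambda|\lambda=0$ only at $\lambda=0$, the identity $|Q|Q=0$ forces the spectral measure of $Q$ to be concentrated at $0$, i.e. $Q=0$. Then $P^2=Q^2=0$ together with $P\geq0$ gives $P=0$, whence $S=T^2=0$. The case $\Ima(S)\geq0$ is handled by the symmetric argument, exchanging the roles of $P$ and $Q$: there $Q\geq0$ yields $Q=|P|$, and the same commutation kills $P$ and then $Q$.

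As for difficulty, there is no serious obstacle once the reduction $S^2=0$ is in place; the one idea that makes the proof short is recognising $P=|Q|$ via uniqueness of the positive square root, after which the commutativity of $Q$ with $|Q|$ collapses the anticommutator. I would also note that the hypothesis $\dim H\geq3$ plays no role in the argument itself: it only serves to make the statement non-trivial, since for $\dim H\leq2$ a nilpotent $T$ already satisfies $T^2=0$ without any positivity assumption.
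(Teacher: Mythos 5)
Your proof is correct, and it takes a genuinely more economical route than the paper's. The paper keeps the Cartesian decomposition at the level of $T$ itself: writing $T=A+\mi B$, it expands $T^3=0$ into its real and imaginary parts and manipulates the resulting identities until it extracts
\[(A^2-B^2)^2=(AB+BA)^2\quad\text{and}\quad (AB+BA)(A^2-B^2)=-(A^2-B^2)(AB+BA),\]
which are exactly your relations $P^2=Q^2$ and $PQ+QP=0$ for $P=\Real(T^2)$ and $Q=\Ima(T^2)$; it then concludes with an auxiliary lemma asserting that two anticommuting self-adjoint operators with equal squares, one of which is positive, must both vanish. You reach the same pair of relations in a single line by observing that $(T^2)^2=T^4=0$ and expanding the Cartesian decomposition of $S=T^2$ directly, bypassing the expansion of $T^3$ altogether. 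Your inline conclusion ($P=|Q|$ by uniqueness of the positive square root, $|Q|$ commutes with $Q$, hence $PQ+QP=2|Q|Q=0$, forcing $Q=0$ and then $P=0$) is in substance a proof of that lemma; the paper's lemma proof runs on the same engine, namely that anticommutation gives commutation with the square, and positivity upgrades this to commutation with the operator itself. What your reduction buys is that it proves more with less: you never use the full strength of $T^3=0$, only $(T^2)^2=0$, so your argument in fact shows that any $S\in B(H)$ with $S^2=0$ and $\Real S\geq0$ (or $\Ima S\geq0$) is zero, which recovers the bounded case of the square-zero result of \cite{Frid-Mortad-Dehimi-Nilpotence-UNBD} cited in the introduction. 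Your closing remark is also accurate: $\dim H\geq3$ is unused in the paper's proof as well, serving only to exclude dimensions where the conclusion holds trivially.
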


\begin{rema}
Readers will notice from the proof of the previous result that we
will obtain the same outcome if we substitute "$\geq 0$" with "$\leq
0$".
\end{rema}

The above result could be shown using various methods. We have
chosen one based on the Cartesian decomposition. First, we require
the following auxiliary result.

\begin{lem}\label{RS=-SR S2=R2 S R Hermitian S=R=0 Lemmmmmmma}
Let $R,S\in B(H)$ be self-adjoint, with one being positive. Assume
$S^2=R^2$ and $SR=-RS$. Then it follows that $R=S=0$.
\end{lem}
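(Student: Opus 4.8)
The plan is to extract everything from the anticommutation relation $SR=-RS$ together with positivity of one of the operators, which by the symmetry of the hypotheses in $R$ and $S$ we may assume to be $R$, so $R\geq 0$. First I would observe that the anticommutation propagates to the square: multiplying $SR=-RS$ on the right by $R$ and invoking the relation once more gives $SR^2=-RSR=R^2S$, so that $S$ commutes with $R^2$.

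The crucial step is to upgrade this to commutation of $S$ with $R$ itself. Since $R\geq 0$, it is the unique positive square root of $R^2$, and the positive square root of a positive operator is a norm-limit of polynomials in that operator; hence every bounded operator commuting with $R^2$ also commutes with $R$. Applying this to $S$ yields $SR=RS$, which combined with the hypothesis $SR=-RS$ forces $2SR=0$, that is, $SR=RS=0$. I would then bring in the remaining hypothesis $S^2=R^2$: from $SR=0$ we get $S^2R=0$, and replacing $S^2$ by $R^2$ this reads $R^3=0$. As $R$ is positive (hence normal), $\|R\|^3=\|R^3\|=0$, so $R=0$; consequently $S^2=R^2=0$, and the self-adjointness of $S$ gives $\|S\|^2=\|S^2\|=0$, whence $S=0$. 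Since the two hypotheses are unchanged under interchanging $R$ and $S$, the case in which $S$ is the positive operator follows by the identical argument with the roles reversed.

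The main obstacle is the passage from ``$S$ commutes with $R^2$'' to ``$S$ commutes with $R$'': this is exactly where positivity of $R$ is indispensable, since it is what furnishes the unique positive square root realizing $R$ as a function of $R^2$. Dropping positivity destroys the conclusion, as a pair of anticommuting self-adjoint involutions (for instance two of the Pauli matrices, each squaring to the identity) satisfies $S^2=R^2$ and $SR=-RS$ while being nonzero; so the positivity assumption cannot be removed.
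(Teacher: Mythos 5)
Your proof is correct and follows essentially the same route as the paper's: assume $R\geq 0$ without loss of generality, derive $SR^2=R^2S$ from the anticommutation, upgrade to $SR=RS$ via the unique positive square root, conclude $SR=RS=0$, and then use $S^2=R^2$ to force cubes (hence the operators themselves) to vanish. You in fact supply more detail than the paper does, notably the polynomial-approximation justification for commuting with the square root and the norm identities for self-adjoint operators.
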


\begin{proof}
Let us assume $R\geq0$ without loss of generality. Since $SR=-RS$,
we can deduce that $SR^2=R^2S$, and therefore $SR=RS$. Thus, $SR=0$,
which implies $SR^2=S^2R=0$. As a result, $R^3=S^3=0$, and
consequently, $R=S=0$, as desired.
\end{proof}

\begin{rema}There are self-adjoint non-zero matrices $R$ and $S$ that
satisfy $S^2=R^2$ and $SR=-RS$. Indeed, consider two of the Pauli
spin matrices on $\C^2$, namely:
\[R=\left(
      \begin{array}{cc}
        0 & 1 \\
        1 & 0 \\
      \end{array}
    \right)\text{ and }S=\left(
      \begin{array}{cc}
        0 & -\mi \\
        \mi & 0 \\
      \end{array}
    \right)
\]
(more commonly denoted by $\sigma_x$ and $\sigma_y$). It is clear
that $RS=-SR$ and $R^2=S^2$. However, neither matrices $R$ and $S$
are positive semi-definite. More generally, if $S$ and $R$ are two
self-adjoint operators and one of them is positive, then $SR=\lambda
RS\neq0$, where $\lambda\in\C$, implies that $\lambda=1$ only. This
result is originally from \cite{BBP}, and a different proof can be
found in \cite{Chellali-Mortad-2014}.
\end{rema}

\begin{proof}[Proof of Theorem \ref{T3=0 gives T2=0 if re T2 positive
THmmMM}] Write $T=A+\mi B$, where $A,B\in B(H)$ are self-adjoint. We
have
\[T^3=A^3-B^2A-AB^2-BAB+\mi(A^2B-B^3+ABA+BA^2).\]
Since $A^3-B^2A-AB^2-BAB$ and $A^2B-B^3+ABA+BA^2$ are self-adjoint,
and $T^3=0$, it ensues that
\[A^3-B^2A-AB^2-BAB=0\text{ and }A^2B-B^3+ABA+BA^2=0.\]
These last two equations may be rewritten in different forms, e.g.,
\[(A^2-B^2)A=(AB+BA)B,~(B^2-A^2)B=(AB+BA)A,\]
\[A(A^2-B^2)=B(AB+BA),~B(B^2-A^2)=A(AB+BA).\]
Therefore,
\[(AB+BA)A^2=(B^2-A^2)BA\text{ and }(A^2-B^2)AB=(AB+BA)B^2,\]
and so
\[(AB+BA)(A^2-B^2)=-(A^2-B^2)(AB+BA).\]
On the other hand,
\[(A^2-B^2)A^2=(AB+BA)BA\text{ and }(B^2-A^2)B^2=(AB+BA)AB,\]
thereby
\[(A^2-B^2)^2=(AB+BA)^2.\]

Since $A^2-B^2$ and $AB+BA$ are both self-adjoint and,
$A^2-B^2\geq0$ (or $AB+BA\geq0$), Lemma \ref{RS=-SR S2=R2 S R
Hermitian S=R=0 Lemmmmmmma} implies that $A^2-B^2=AB+BA=0$. Since
$T^2=A^2-B^2+\mi(AB+BA)$, it is seen that $T^2=0$, as required.
\end{proof}

\begin{cor}
Let $T\in B(H)$ be such that $T^3=0$, where $\dim H\geq3$. If
$\Real(T^2)\geq0$ or $\Ima (T^2)\geq0$, then $T$ is complex
symmetric.
\end{cor}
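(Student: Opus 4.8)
The plan is to reduce the corollary to the already-settled square-zero case and then invoke the complex symmetry of such operators. First I would apply Theorem~\ref{T3=0 gives T2=0 if re T2 positive THmmMM} verbatim: the hypotheses here, namely $T^3=0$, $\dim H\geq 3$, and $\Real(T^2)\geq 0$ (or $\Ima(T^2)\geq 0$), are exactly those of that theorem, so I immediately obtain $T^2=0$. This collapses the problem to the purely structural statement that \emph{every} $T\in B(H)$ with $T^2=0$ is complex symmetric, and it also explains why the dimensional restriction $\dim H\geq 3$ is inherited (for $\dim H=2$ the square-zero conclusion is automatic).

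For that last implication I would simply quote the result already recorded in the introduction, namely Theorem~5 of \cite{Garcia-IEOT T2=0 T complex symmetr}, which asserts precisely that $T^2=0$ forces $T=CT^*C$ for some conjugation $C$. With that citation available the argument is a two-line deduction, and essentially no obstacle remains, since all of the analytic work has already been absorbed into the preceding theorem via Lemma~\ref{RS=-SR S2=R2 S R Hermitian S=R=0 Lemmmmmmma}.

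If instead one wanted a self-contained argument, the crux would be to build the conjugation explicitly from the structure of a square-zero operator. Setting $N=\ker T$ and $M=N^\perp$, the inclusions $\ran T\subseteq\ker T$ and $TN=0$ force the block form
\[T=\begin{pmatrix} 0 & 0 \\ X & 0 \end{pmatrix},\qquad X\colon M\to N,\]
relative to $H=M\oplus N$. One would then use the polar decomposition $X=V|X|$ together with a suitable antilinear isometry matching $M$ with $\overline{\ran X}$ to assemble an involutive, isometric, conjugate-linear $C$ with $CT^*C=T$. The main difficulty in this route is the bookkeeping required to check that $C$ is simultaneously conjugate-linear, involutive, isometric, and correctly intertwines $T$ and $T^*$ on the (possibly infinite-dimensional) complementary subspaces; but since the cited theorem already discharges exactly this verification, I would keep the proof short and just invoke it.
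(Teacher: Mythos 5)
Your proposal is correct and follows exactly the paper's own proof: apply Theorem~\ref{T3=0 gives T2=0 if re T2 positive THmmMM} to get $T^2=0$, then invoke Theorem~5 of \cite{Garcia-IEOT T2=0 T complex symmetr} to conclude that $T$ is complex symmetric. The additional sketch of an explicit construction of the conjugation is unnecessary, as you yourself note, since the cited theorem already covers that verification.
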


\begin{proof}
Based on Theorem \ref{T3=0 gives T2=0 if re T2 positive THmmMM}, we
have $T^2=0$, and according to Theorem 5 in \cite{Garcia-IEOT T2=0 T
complex symmetr}, it follows that $T$ is complex symmetric.
\end{proof}

\begin{rema}
The preceding corollary is interesting in the sense that the authors
of \cite{Garcia-Wogen-TAMS complex symtric T2=0 normal T complex
symmetric} showed in their Theorem 2 that for every finite $n\geq3$
and for every $H$ with $\dim H\geq n$, there is an algebraic
operator $T$ on $H$, that is, $p(T)=0$ for some polynomial $p$, of
degree $n$, which is not a complex symmetric operator.
\end{rema}

We conclude with the following simple observation.

\begin{pro}
Let $T\in B(H)$ be such that $T^n=0$ for some $n\geq2$, where
$n\in\N$. Then $T^{n-1}$ is self-adjoint if and only if $T^{n-1}=0$.
\end{pro}

\begin{proof}We only show that the self-adjointness of $T^{n-1}$ implies
$T^{n-1}=0$. Since $T^n=0$, it follows that
$(T^{n-1})^2=T^{2n-2}=T^nT^{n-2}=0$, which yields $T^{n-1}=0$, as
wished.
\end{proof}

\bibliographystyle{amsplain}

\begin{thebibliography}{1}


\bibitem{Alzuraiqi Patel. n normal}
S.A. Alzuraiqi, A.B. Patel. On $n$-normal operators, \textit{General
Math. Notes}, \textbf{1} (2010) 61-73.

\bibitem{BBP}
J. A. Brooke, P. Busch, D. B. Pearson. Commutativity up to a factor
of bounded operators in complex Hilbert space, \textit{R. Soc. Lond.
Proc. Ser. A Math. Phys. Eng. Sci.}, {\bf 458/2017} (2002) 109-118.

\bibitem{Chellali-Mortad-2014}
Ch. Chellali, M. H. Mortad. Commutativity up to a factor for bounded
and unbounded operators, \textit{J. Math. Anal. Appl.},
\textbf{419/1}  (2014), 114-122.


\bibitem{Dehimi-Mortad-squares-polynomials}
S. Dehimi, M. H. Mortad. Unbounded operators having self-adjoint,
subnormal or hyponormal powers, \textit{Math. Nachr.},
\textbf{296/9} (2023) 3915-3928.

\bibitem{Embry 1966 Normality!!}
M. R. Embry. Conditions implying normality in Hilbert space,
\textit{Pacific J. Math.}, \textbf{18} (1966) 457-460.

\bibitem{Embry nth roots}
M. R. Embry. $n$th roots of operators, \textit{Proc. Amer. Math.
Soc.}, \textbf{19} (1968) 63-68.


\bibitem{Frid-Mortad-Dehimi-Nilpotence-UNBD}
N. Frid, M. H. Mortad, S. Dehimi. When nilpotence implies the
zeroness of linear operators, \textit{Khayyam J. Math.},
\textbf{8/2} (2022) 163-173.

\bibitem{Garcia-IEOT T2=0 T complex symmetr}
S. R. Garcia. Aluthge transforms of complex symmetric operators,
\textit{Integral Equations Operator Theory}, \textbf{60/3} (2008)
357-367.

\bibitem{Garcia-Wogen-TAMS complex symtric T2=0 normal T complex symmetric}
S. R. Garcia, W. R. Wogen. Some new classes of complex symmetric
operators, \textit{Trans. Amer. Math. Soc.}, \textbf{362/11} (2010)
6065-6077.

\bibitem{halmos-book-1982}
P. R. Halmos. A Hilbert space problem book, \textit{Springer}, 1982
(2nd edition).

\bibitem{Jibril. N power normal operators}
A. A. Jibril. On $n$-power normal operators, \textit{Arab. J. Sci.
Eng. Sect. A Sci.}, \textbf{33/2} (2008) 247-251.

\bibitem{Kurepa-nth root normal Math. Zeit 1962}
S. Kurepa. On $n$-th roots of normal operators, \textit{Math. Z.},
\textbf{78} (1962) 285-292.

\bibitem{Mashreghi-Ptak-Ross-sq rt Studia 2023}
J. Mashreghi, M. Ptak, W. T. Ross. The square roots of some
classical operators, \textit{Studia Math.}, \textbf{269/1} (2023)
83-106.



\bibitem{Mortad-Oper-TH-BOOK-WSPC}
M. H. Mortad. An operator theory problem book, \textit{World
Scientific Publishing Co.}, (2018).


\bibitem{Mortad-INVERTBILITY-SUM}
M. H. Mortad. On the invertibility of the sum of operators,
\textit{Anal. Math.}, \textbf{46/1} (2020) 133-145.

\bibitem{Mortad-square-root-normal}
M. H. Mortad. On the existence of normal square and nth roots of
operators, \textit{J. Anal.},  \textbf{28/3} (2020) 695-703.

\bibitem{Mortad-cex-BOOK}
M. H. Mortad. Counterexamples in operator theory.
\textit{Birkh\"{a}user/Springer}, Cham (2022).

\bibitem{Mortad-JMAA-2023}
M. H. Mortad, Certain properties involving the unbounded operators
$p(T)$, $TT^*$, and $T^*T$; and some applications to powers and
$nth$ roots of unbounded operators, \textit{J. Math. Anal. Appl.},
\textbf{525/2} (2023). Paper No. 127159, 26 pp.

\bibitem{Putnam-sq-rt-normal}
C. R. Putnam. On square roots of normal operators, \textit{Proc.
Amer. Math. Soc.},\textbf{ 8} (1957) 768-769.

\bibitem{Putnam-square-rt-s-a-logarithm}
C. R. Putnam. On square roots and logarithms of self-adjoint
operators, \textit{Proc. Glasgow Math. Assoc.}, \textbf{4} (1958)
1-2.

\bibitem{Radjavi-Rosenthal-sq-roots-normal}
H. Radjavi, P. Rosenthal.  On roots of normal operators, \textit{J.
Math. Anal. Appl.}, \textbf{34} (1971) 653-664.

\bibitem{Stampfli hyponormal 1965}
J. G. Stampfli. Hyponormal operators and spectral density,
\textit{Trans. Amer. Math. Soc.}, \textbf{117} (1965) 469-476.


\end{thebibliography}

\end{document}